\newtheorem{theorem}{Theorem}
\newtheorem{assume}{Assumption}
\newtheorem{remark}{Remark}
\newtheorem{define}{Definition}
\title{ECCBO: An Inherently Safe Bayesian Optimization with Embedded Constraint Control  for Real-Time Optimization}
\author{ Dinesh Krishnamoorthy\thanks{Financial support from the NWO Veni Early Career Talent Scheme is gratefully acknowledged. } \\
	Department of Mechanical Engineering\\
	Eindhoven University of Technology\\
	5600 MB, Eindhoven \\
	\texttt{d.krishnamoorthy@tue.nl} \\
}
\date{}
\begin{document}

\maketitle

\begin{abstract}                

This paper introduces a model-free real-time optimization (RTO) framework based on unconstrained Bayesian optimization with embedded constraint control. The main contribution lies in demonstrating how this approach simplifies the black-box optimization problem while ensuring \enquote{always-feasible} setpoints, addressing a critical challenge in real-time optimization with unknown cost and constraints. Noting that controlling the constraint does not require detailed process models, the key idea of this paper is to control the constraints to \enquote{some} setpoint using simple feedback controllers. Bayesian optimization then computes the optimum setpoint for the constraint controllers. By searching over the  setpoints for the constraint controllers, as opposed to searching directly over the  RTO degrees of freedom,  this paper achieves an inherently safe and practical model-free RTO scheme. In particular, this paper shows that the proposed approach can achieve zero cumulative constraint violation without relying on assumptions about the Gaussian process model used in Bayesian optimization.The effectiveness of the proposed approach is demonstrated on a benchmark Williams-Otto reactor example.  
\end{abstract}

\keywords{Real-time  optimization  \and data-driven optimization \and constraint control \and Bayesian optimization}


\section{Introduction}

Real-time optimization (RTO) plays a pivotal role in process operations due to its ability to  adapt to changing operating conditions. It ensures that processes can efficiently respond to changing factors like fluctuating demand or supply, thereby optimizing production processes in real-time. More importantly, it helps maintain  process safety and desired product qualities. 
The standard approach to real-time optimization involves  developing detailed process models that are updated using  measurements corresponding to steady-state process operation. The updated process models are then used within an optimization problem to compute the optimal setpoints \citep{chen1987TwoStepOpt,darby2011rto}. However, developing and maintaining detailed process models can be a major  hurdle that impedes practical implementation of RTO in industrial practice.

Model-free approach to online process optimization is an attractive alternative that circumvents the need for rigorous process models. There have been several attempts to develop model-free RTO schemes, most of which are based on online gradient estimation. The key underlying idea behind these approaches is to estimate the steady-state cost gradient directly from the process measurements, and drive it to zero using integral action, such that the necessary condition of optimality is asymptotically satisfied. See for example \cite{garcia1981optimal,golden1989adaptive,bamberger1978adaptive,francois2005NCO,dochain2011extremum}. However estimating the steady-state cost gradient  often leads to prohibitively slow  convergence, especially for process with long settling times \citep{srinivasan2011comparisonJuEst,DK2022RTOreview}.   Zeroth-order model-free optimization schemes such as Bayesian optimization is a promising alternative that avoids the need to estimate steady-state cost gradients.

Bayesian optimization is an effective method for optimizing unknown systems that finds the optimum by iteratively interacting with the system. BO uses a probabilistic surrogate model for the cost function, e.g., Gaussian process (GP), which is updated with each new observation. An acquisition function (also known as utility function)  is induced from the GP surrogate model that guides the search for the optimum by systematically trading-off exploration and exploitation. Unlike gradient-based methods,  Bayesian optimization calculates the next action independently of the current one, allowing for the possibility of significant jumps within the search space. While this notion of exploration speeds up the search for the global optimum, it can also lead to the exploration of infeasible setpoints that may violate safety-critical constraints at steady-state.

Constrained Bayesian optimization  is still an active area of research, especially for engineering applications \citep{berkenkamp2021bayesian,lu2022no,DK2022SafeBO,xu2022vabo}.  The classical approach to handling constraints is to scale the acquisition function with the probability of constraint feasibility  computed from the GP surrogates for the constraints  \citep{gelbert2012advanced}. Although such an approach would eventually converge to a feasible setpoint, it could potentially explore an infeasible setpoint. This is because the probability of constraint (in)feasibility at any point will be known with high probability only after observing the constraint at that point. As such, this approach does not have any guarantees on the cumulative violation of the constraints.  Another approach, known as Safe Bayesian optimization, ensures no constraint violation with high probability \citep{berkenkamp2021bayesian,DK2022SafeBO, DK2023SafeBO:RTO} under the assumption  that the constraint GPs are well calibrated (i.e., the true function is contained within the confidence intervals with high probability). Satisfying such an assumption in practice can often result in an overly cautious  algorithm, since the safe set may expand very slowly.  A third class of approach uses penalty functions, where the constraint violation is within  a prescribed budget, i.e. bounded cumulative constraint violation \citep{lu2022no,xu2022vabo}.

To this end, all the existing approaches to constrained BO requires probabilistic surrogate  models not just for the cost but also for all the unknown constraints. Safety guarantees and /or cumulative constraint violation bounds  only hold under the  assumption that the GP models for the constraints  are well calibrated. Simply put, calibrating the hyperparameters of  the constraint GPs can impact the constraint handling capability. 
Furthermore, the different approaches referenced above also assume that the constraints are independent, and uses individual GP surrogate models for each unknown constraint.

In the context of real-time process optimization, the information flow can be vertically decomposed, where the upper layer uses the setpoints to the lower layer as degrees of freedom, and the lower layer implements the actions to achieve the setpoint \citep{skogestad2023advanced}. 
Based on the fact that controlling the constraint to a setpoint does not require detailed process models, the key idea of this paper is to exploit decentralized feedback controllers (such as PID) to control the constraints to \enquote{some} setpoint, and use Bayesian optimization to find the optimal setpoints to the lower level  constraint controllers.  If a constraint  is optimally active, then the setpoint computed by Bayesian optimization will converge to the limiting value. Whereas, if a constraint is not optimally active, then the Bayesian optimization will converge to some feasible setpoint that optimizes the cost. By transforming the original optimization problem with embedded constraint controllers, BO now searches over the setpoint space, as opposed to searching directly over the RTO degrees of freedom. As such, the decision variables for the BO now only have simple box constraints, which can be trivially handled using standard  Bayesian optimization, and the box constraints inherently ensure that the RTO layer will never compute an infeasible setpoint.

To this end, this paper presents an inherently safe Bayesian optimization with constraint control embedded (ECCBO) that achieves zero cumulative constraint violation,  without relying on assumptions about the Gaussian process model used in Bayesian optimization. Since the focus of the paper is on the steady-state RTO layer, \enquote{safety} is defined w.r.t the constraints at steady-state operation.
The reminder of the paper is organized as follows: Section~2 recalls the preliminaries of RTO, constraint control, and Bayesian optimization. The proposed inherently safe Bayesian optimization algorithm with embedded constraint control (ECCBO) is presented in Section~3. Section~4 demonstrates the proposed approach using a benchmark Williams-Otto rector case study, before concluding the paper in Section~5.


%
\section{Preliminaries}

 \paragraph*{Real-time optimization and constraint control:}
 Consider the steady-state real-time optimization problem
 \begin{subequations}\label{Eq:RTO}
 \begin{align}
	\min_{\mathbf{u} \in \mathcal{U}}\; & J(\mathbf{u},\mathbf{d})\\
	\text{s.t.} \; & g_i(\mathbf{u},\mathbf{d}) \ge 0, i \in \mathbb{I}_{1:n}
\end{align}
 \end{subequations}
 where $ \mathbf{u} \in \mathcal{U} \subseteq \mathbb{R}^m $ denotes the set of decision variables for the RTO layer, $ \mathbf{d} \in \mathbb{R}^l $ denotes the set of disturbances, $ J: \mathcal{U} \times \mathbb{R}^l \rightarrow \mathbb{R}$ denotes the cost function, and $ g_i: \mathcal{U} \times \mathbb{R}^l \rightarrow \mathbb{R}$ for $ i = 1,\dots,n $ denotes the set of $ n $  constraints. The input constraint set $ \mathcal{U} = \mathcal{U}_{i} \times \dots \times \mathcal{U}_{m} $ is a simple set  composed of upper and lower bounds. Typically, the degrees of freedom $ \mathbf{u} $ for the RTO layer are provided as setpoints to the regulatory control layer below.

 For a process with $ n $ constraints, there can be utmost $ 2^n $ possible combinations of active constraints.
 The specific set of optimally active constraints, denoted as $ \mathbf{g}_{\mathbb{A}} \subseteq \mathbf{g}:= [{g_{1},\dots,g_{n}} ]^T$, may change depending on the operating conditions. If a constraint is known to be optimally active, indicated by $ g_{i} \subseteq \mathbf{g}_{\mathbb{A}} $ and $ g_{i}^* = 0 $, it can be controlled to a setpoint of its limiting value  (which is 0 when expressed in the positive null form as done in \eqref{Eq:RTO}) using   simple feedback controllers like PID, a technique known as active constraint control. However, for constraints that are not optimally active, denoted as $ g_{j} \nsubseteq \mathbf{g}_{\mathbb{A}} $, determining their non-trivial optimal values $ g_{j}(\mathbf{u}^*,\mathbf{d}) = z_{j}^* >0 $ becomes challenging.

 Typically, the RTO layer uses detailed process models to solve an optimization problem numerically to identify the  set of active constraints and the optimal values $ z_{j}^* >0 $ for the inactive constraints (assuming that the model used in the optimization solver captures the plant accurately).  However, this can be challenging if detailed process models are not available, which is the case that is considered in this paper.

\paragraph*{Bayesian Optimization:}
Bayesian optimization  aims to optimize  an unknown black-box function  \[ \min_{\mathbf{x} \in \mathcal{X}} F(\mathbf{x}) \] where $ \mathcal{X} $ is a simple set.  Bayesian optimization uses a probabilistic  model, such as Gaussian processes as a surrogate for the unknown function $ F(\mathbf{x}) $.
\begin{equation}\label{Eq:GP}
	F(\mathbf{x}) \sim \mathcal{GP}\left(\mu(\mathbf{x}),k(\mathbf{x},\mathbf{x}')\right)
\end{equation} where $ \mu(\mathbf{x})  := \mathbb{E}(F(\mathbf{x}))$ denotes the mean and $ k(\mathbf{x},\mathbf{x}') := \mathbb{E}\left[\left(F(\mathbf{x}) - \mu(\mathbf{x})\right)\left(F(\mathbf{x}') - \mu(\mathbf{x}')\right)\right] $  denotes the covariance of the unknown function.  Given noisy measurements $ y  $ of the unknown function $ F(\mathbf{x}) $, the Gaussian process model is updated by conditioning on the observations $ \{ (\mathbf{x}_{i}, y_{i})\}_{i} $ \citep{rasmussen2006gaussian}.
An acquisition function $ \alpha(\mathbf{x}) $ is induced from the Gaussian process surrogate, which  is then optimized to find the next action to take.
\begin{equation}\label{Eq:GPacq}
	\mathbf{x}_{t+1} = \arg \min_{\mathbf{x} \in \mathcal{X}} \alpha_t(\mathbf{x})
\end{equation}
where $ \alpha_t: \mathcal{X} \rightarrow \mathbb{R} $ is the acquisition function induced from the Gaussian process model conditioned on the past $ t $ measurements $ \{ (\mathbf{x}_{i}, y_{i})\}_{i=1}^t $. There are several choices for acquisition functions such as expected improvement, lower confidence bound, Thompson sampling etc. See for example \cite{frazier2018tutorial} and the references therein for further details. The crucial idea here is that the acquisition function balances exploration  and exploitation to find the optimum of the unknown function by sequentially interacting with the system.

\paragraph*{Contextual Bayesian optimization:}
To account for variables that are not directly under our control, such as prices, feed rates etc., the Bayesian optimization problem can be extended to account for these external factors. Here the optimization problem is formulates as
\[ \min_{\mathbf{x} \in \mathcal{X}} F(\mathbf{x},\mathbf{d}) \]
where $ \mathbf{d} \in \mathbb{R}^l$ denotes the external factors, also known as contexts. In this case, the GP model is used as a surrogate for $  F(\mathbf{x},\mathbf{d})  $ by conditioning on the observations $ \{ ([\mathbf{x}_{i},\mathbf{d}_{i}]^{\mathsf{T}}, y_{i})\}_{i} $. For a given contextual information $ \mathbf{d}_{t+1} $ at time step $ t+1 $, the next action is computed by optimizing
\begin{equation}\label{Eq:CGP}
	\mathbf{x}_{t+1} = \arg \min_{\mathbf{x} \in \mathcal{X}} \alpha_t(\mathbf{x},\mathbf{d}_{t+1})
\end{equation}
where $ \alpha_t: \mathcal{X} \times \mathbb{R}^l  \rightarrow \mathbb{R}$ is any suitable acquisition function derived from the GP surrogate conditioned on the past  $ t $ observations. The literature on constrained Bayesian optimization that accounts for additional nonlinear constraints is not included, as these will not be utilized in this paper.
\section{Bayesian Optimization with embedded constraint control (ECCBO)}
\begin{figure*}
	\centering
	\includegraphics[width=0.76\linewidth]{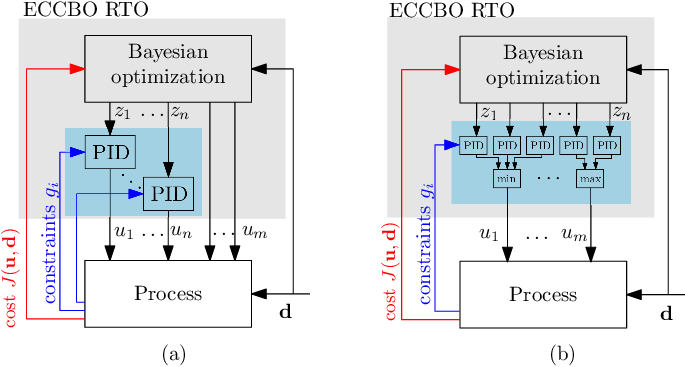}
	\caption{Proposed real-time optimization scheme using Bayesian optimization  highlighted in gray, with embedded constraint control (ECCBO) highlighted in blue. The process block shown here includes the lower level setpoint control layer and the plant.  (a) More degrees of freedom than constraint $ m>n $. (b) More constraints than degrees of freedom $ m<n $. Note that the process block  may further contain lower level regulatory controllers. }\label{Fig:ECCBO}
\end{figure*}
\begin{assume}\label{asm:blackbox}
	The cost and constraint functions $ J(\cdot,\cdot)  $ and $ g_{i}(\cdot,\cdot) $ in \eqref{Eq:RTO} are unknown, but are measured in real-time.
\end{assume}

The main objective is to find the optimum of the real-time optimization problem \eqref{Eq:RTO}, under Assumption~\ref{asm:blackbox}.   In search for the optimum using Bayesian optimization, we  want to ensure that no infeasible setpoints are explored.  


In this paper, we propose to exploit  constraint control  to safely find the optimum of the black-box system.
To do so, we transform the  inequality constrained problem \eqref{Eq:RTO} to the following equality constrained optimization problem.
 \begin{subequations}\label{Eq:RTO2}
	\begin{align}
		\min_{\mathbf{u},\mathbf{z}}\; & J(\mathbf{u},\mathbf{d})\\
		\text{s.t.}\;  & g_i(\mathbf{u},\mathbf{d}) = z_{i}, i \in \mathbb{I}_{1:n}
	\end{align}
\end{subequations}
where $ \mathbf{z} = [z_{1},\dots,z_{n}]^{\mathsf{T}} $ denotes the optimum value of the constraints.

We  use $ n $ degrees of freedom to tightly control the $ n $ constraints to their corresponding setpoints $ z_{i} $  using decentralized feedback control loops. The $ n $ degrees of freedom, and the $ n $ constraints are paired such that the steady-state relative gain array (RGA) of the resulting transfer matrix is non-negative and close to identity matrix at crossover frequencies. In order to minimize large time delays it would be preferable to pair constraints that are physically close to the manipulated variable (pair-close rule). One must also try to avoid pairing important constraints with  degree of freedom $ u_{i} $ with tight bounds $ \mathcal{U}_{i} $  \citep{DK2022RTOreview}.

We know that if a constraint $ g_{i}  $ is optimally active, then $ z_{i} = 0 $, and if a constraint $ g_{j} $ is not optimally active, then  we want $ z_{i} >0$. However, there are two challenges. Firstly, we do not know which of the constraints are optimally active, and which are not. Secondly, for the inactive constraints $ g_{j} \nsubseteq \mathbf{g}_{\mathbb{A}}$, we do not know  the exact value of $ z_{j}>0 $.
This paper proposes to use unconstrained Bayesian optimization to find the optimal setpoints $ z_{i}  \in [0,\bar{z}_{i}] $ for $ i =1,\dots,n $ constraint controllers, and for the remaining unconstrained degrees of freedom $ u_{n+1},\dots, u_{m} $.
\begin{assume}[Perfect control]\label{asm:perfectControl}
	The constraint controllers are designed such that the desired setpoint $ z_{i}  \in [0,\bar{z}_{i}]$ is asymptotically achieved for all $ i = 1,\dots,n $.
\end{assume}
Since the constraints are tightly controlled  to  their corresponding setpoints $ z_{i} $, we can equivalently rewrite \eqref{Eq:RTO2} as
	\begin{align} \label{Eq:RTO:BO}
	\min_{\mathbf{x} \in  \mathcal{X}}\;  F(\mathbf{x},\mathbf{d}) &=
		\min_{\mathbf{u}\in \mathcal{U},\mathbf{z}\in\mathcal{Z}}\;  J(\mathbf{u},\mathbf{d})\\
		& \qquad \text{s.t.}  \; g_i(\mathbf{u},\mathbf{d}) = z_{i}, i \in \mathbb{I}_{1:n} \nonumber
\end{align}
where $ \mathbf{x} := [z_{1},\dots, z_{n},u_{n+1},\dots,u_{m}]^T $.
Let us define $ \mathcal{Z}_{i} = [0,\bar{z}_{i}] $, where $ \bar{z}_{i} $ is some arbitrarily large upper bound for the constraint set points.  This leads to the constraint set  $ \mathcal{X} := \mathcal{Z}_{1} \times \dots \times \mathcal{Z}_{n}\times \mathcal{U}_{n+1}\times\dots \times \mathcal{U}_{m} $.

We can see that \eqref{Eq:RTO:BO} is an unconstrained black-box optimization problem, which can be solved using standard contextual Bayesian optimization. Here, we use a  Gaussian process as a surrogate for the cost function $ F(\mathbf{x},\mathbf{d})$ as a function of $\mathbf{x}$ and $ \mathbf{d} $. The GP is conditioned on the cost observed with the embedded constraint controllers that control $ g_{i} $ to a setpoint of $ z_{i} $ using  $ u_{i} $ as the manipulated variable.
\begin{remark}[Steady-state wait-time]
	Note that the Gaussian process model used in the Bayesian optimization algorithm is updated based on  steady-state cost measurement. As such, the BO-based RTO scheme requires a steady-state detection algorithm to ensure that GP models are conditioned on steady-state data. 
\end{remark}

Let us define the violation of the $ i^{th} $ constraint  at steady-state as $ v_{i,t} = -\left[g_{i}(\mathbf{u}_{t},\mathbf{d}_{t})\right]^- $ and the cumulative violation over $ T $ RTO iterations as \[  \mathcal{V}_{T} = \sum_{i=1}^{n}\sum_{t=1}^{T} v_{i,t}    \]
\begin{theorem}
	Under Assumption~\ref{asm:perfectControl}, the ECCBO framework achieves a cumulative violation of $ \mathcal{V}_{T} = 0 $ for any acquisition function.
\end{theorem}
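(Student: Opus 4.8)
The plan is to show that the box constraints $\mathbf{x}\in\mathcal{X}$ on the Bayesian optimization decision variables, combined with perfect control of the constraints, guarantee that every setpoint implemented by the RTO layer is steady-state feasible, hence $v_{i,t}=0$ for all $i$ and all $t$. First I would fix an arbitrary RTO iteration $t$ and an arbitrary acquisition function $\alpha_t$. By the definition of the ECCBO scheme, the action is chosen as $\mathbf{x}_t = \arg\min_{\mathbf{x}\in\mathcal{X}}\alpha_t(\mathbf{x},\mathbf{d}_t)$, so in particular $\mathbf{x}_t\in\mathcal{X}=\mathcal{Z}_1\times\dots\times\mathcal{Z}_n\times\mathcal{U}_{n+1}\times\dots\times\mathcal{U}_m$. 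Writing $\mathbf{x}_t = [z_{1,t},\dots,z_{n,t},u_{n+1,t},\dots,u_{m,t}]^{\mathsf T}$, this means $z_{i,t}\in\mathcal{Z}_i=[0,\bar z_i]$ for each $i\in\mathbb{I}_{1:n}$; crucially, $z_{i,t}\ge 0$ by construction of the set $\mathcal{Z}_i$.

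Next I would invoke Assumption~\ref{asm:perfectControl} (perfect control): the $i$-th constraint controller, manipulating $u_i$, asymptotically drives $g_i(\mathbf{u},\mathbf{d}_t)$ to its setpoint $z_{i,t}$. Hence at steady state $g_i(\mathbf{u}_t,\mathbf{d}_t) = z_{i,t}\ge 0$ for every $i\in\mathbb{I}_{1:n}$. Therefore the negative part vanishes, $\left[g_i(\mathbf{u}_t,\mathbf{d}_t)\right]^- = 0$, so the steady-state violation is $v_{i,t} = -\left[g_i(\mathbf{u}_t,\mathbf{d}_t)\right]^- = 0$. Summing over $i=1,\dots,n$ and $t=1,\dots,T$ gives $\mathcal{V}_T = \sum_{i=1}^n\sum_{t=1}^T v_{i,t} = 0$, and since $t$ and $\alpha_t$ were arbitrary, the conclusion holds for any acquisition function and any horizon $T$.

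The argument is essentially a bookkeeping chain — box constraint $\Rightarrow$ nonnegative setpoint $\Rightarrow$ (via perfect control) nonnegative constraint value at steady state $\Rightarrow$ zero violation — so there is no real analytical obstacle. The only subtlety worth flagging explicitly is that the guarantee is a steady-state statement: it relies on Assumption~\ref{asm:perfectControl} delivering the setpoint \emph{asymptotically}, consistent with the steady-state wait-time remark, and it does not claim anything about transient excursions during controller settling. I would also note in passing that the result is independent of the Gaussian process surrogate entirely — the GP enters only through $\alpha_t$, whose minimizer is constrained to $\mathcal{X}$ regardless of how well or poorly the GP is calibrated — which is precisely the point the theorem is making relative to prior constrained-BO approaches.
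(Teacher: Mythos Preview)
Your proof is correct and follows exactly the same approach as the paper's own proof: the box constraint $z_{i,t}\in[0,\bar z_i]$ forces nonnegative setpoints, perfect control (Assumption~\ref{asm:perfectControl}) then yields $g_i(\mathbf{u}_t,\mathbf{d}_t)=z_{i,t}\ge 0$ at steady state, hence $v_{i,t}=0$ and $\mathcal{V}_T=0$. Your version is simply more explicit in spelling out each step and in flagging the steady-state caveat and the independence from the GP surrogate.
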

\begin{proof}
	The Bayesian optimization using any acquisition function by design searches over the setpoints $ z_{i} \in [0,\bar{z}_{i}]$ for $ i = 1,\dots,n $. By controlling the constraints to a setpoint $ z_{i} \ge 0$ using the degrees of freedom $ u_{i} $, the constraint violation at steady-state $ v_{i,t} = 0 $ for all $ t\ge1 $, for all $ i = 1,\dots,n$, which results in $ \mathcal{V}_{T}  = 0$.
\end{proof}
Denoting $ F^* $ as the minimum of \eqref{Eq:RTO:BO} for a given disturbance $ \mathbf{d} $, querying an action $ \mathbf{x}_{t} $  at time $ t $ may lead to a gap at steady-state, which we define as the steady-state regret $ r_{t} = F(\mathbf{x}_{t},\mathbf{d})  - F^*$. The cumulative regret over $ T $ RTO runs is then defined as $ R_{T} =\sum_{t=0}^{T} r_{t}  $.
\begin{remark}[Regret]
	Since the Bayesian optimization in ECCBO reduces to a standard unconstrained Bayesian optimization, the regret bound for $  R_{T} $ follows from BO literature for specific acquisition functions under appropriate conditions \citep{chowdhury2017kernelized}.
	Bounding the contextual regret is in general  a challenging problem, as regret is evaluated in relation to the best action for each specific context. Consequently, the worst-case regret, e.g., when employing GP-LCB, to identify the optimal action $ \mathbf{x}^* $ can be determined after multiple instances of encountering a particular context. This is also reasonable in online process optimization of continuous processes, where the goal is optimize for the steady-state operating conditions, i.e. for a given $ \mathbf{d} $. As such, the standard regret results from the BO literature will also hold for  ECCBO.
\end{remark}

\begin{remark}[Overconstrained case $ m<n $]
 If the number of degrees of freedom is less than the number of constraints, we can have utmost $ m $ constraints optimally active.  In this case, max/min selector blocks can be used along with the $ n $ embedded constraint controllers. The choice of the selectors and the grouping of the constraint controllers for each selector block follows the selector design procedure as explained by \cite{DK2020selector}. The setpoint to the $ n $ controllers will be determined by the unconstrained Bayesian optimization. That is,  the decision variables for the transformed  Bayesian optimization problem are $ \mathbf{x} = [z_{1},\dots,z_{n}]^{\mathsf{T}}$.  The is schematically shown in Fig.~\ref{Fig:ECCBO}b.
\end{remark}
The design procedure for the proposed model-free RTO scheme, known as ECCBO-RTO can be summarized as follows:
\begin{itemize}
	\item Offline:
	 \begin{itemize}
\item[--]  Pair each constraint, denoted as $ g_{i} $, with an available degree of freedom using established pairing rules from process control literature \citep{DK2022RTOreview}.
\item[--]  Tightly tune the constraint controllers to minimize steady-state wait time, and at the same time ensuring clear time-scale separation from  any lower level  controllers.
\item[--]  Build a Gaussian process  to model the cost with respect to   the contexts $ \mathbf{d} $, the  setpoints $ z_{i} $ to the constraint controllers $ i=1,\dots,n $, and any remaining unconstrained degrees of freedom $ u_{{n+1}},\dots, u_{m} $.
 \end{itemize}
 	\item Online:
 \begin{itemize}
\item[--] If the process is operating at steady-state, update the Gaussian process model by conditioning on the past $ t $ observations of the cost measurement.
\item[--]  Induce an acquisition function $ \alpha_{t}(\mathbf{x},\mathbf{d}) $ based on the updated Gaussian process model, where $ \mathbf{x} := [z_{1},\dots,z_{n},u_{n+1},\dots,u_m]^{\mathsf{T}} $ contains the set of setpoints for the constraint controllers, as well as any remaining unconstrained degrees of freedom.
\item[--]  For a given context $ \mathbf{d}_{t+1} $, optimize the acquisition function $ \alpha(\mathbf{x},\mathbf{d}_{{t+1}}) $  to compute the actions $ \mathbf{x}_{{t+1}} $, which is implemented on the process.
  \end{itemize}
\end{itemize}

\section{Illustrative example}
This section demonstrates the proposed approach on a benchmark Williams-Otto reactor example, which converts raw materials A and B to useful products P and E, through a series of reactions
\begin{align*}
	A + B &\rightarrow C &k_{1} = 1.6599\times 10^{6}e^{-6666.7/T_{r}}\\
	B+C & \rightarrow P +E &k_{2} = 7.2177\times 10^{8}e^{-8333.3/T_{r}}\\
	C+P& \rightarrow G & k_{3} = 2.6745\times 10^{12}e^{-11111/T_{r}}
\end{align*}
The feedrate $ F_{A} $ with pure A component is an external disturbance. The feedrate $ F_{B} $ and the reactor temperature $ T_{r} $ are the two degrees of freedom, that are used to optimize the process.
\begin{align}\label{Eq:Cost}
	\max_{F_{B},T_{r}} \; & 1043.38x_{P}(F_{A} + F_{B})  + 20.92x_{E}(F_{A}+F_{B}) \nonumber \\
	&\quad - 79.23F_{A} - 118.34F_{B}  \\
	\textup{s.t.} & \quad x_{G} \leq 0.08 \nonumber \\
	& \quad x_{A} \leq 0.12 \nonumber
\end{align}
To design an ECCBO-RTO for this problem, we employ two SISO constraint control loops.
\begin{enumerate}
\item  Control the concentration of component $ G $ to a setpoint $z_{G} $ using reactor temperature $ T_{r} $.
\item Control the concentration of component $ A $ to a setpoint $z_{A} $ using feed rate $ F_{B} $.
\end{enumerate}

Contextual Bayesian optimization is then used to find the optimal setpoints $ z_{G} \in [0.07,0.08]$kg/kg and $ z_{A} \in [0.07,0.12]$kg/kg,  for the two control loops, respectively, for a given feed rate $ F_{A} $. Simply put, the decision variables for the Bayesian optimization are $ z_{G} $  and $ z_{A} $, and since the search space for this decision variables do not exceed 0.08 and 0.12, respectively, we can guarantee by design that the ECCBO-RTO layer will not compute any infeasible setpoints.
\begin{figure*}[t]
	\centering
	\includegraphics[width=0.99\linewidth]{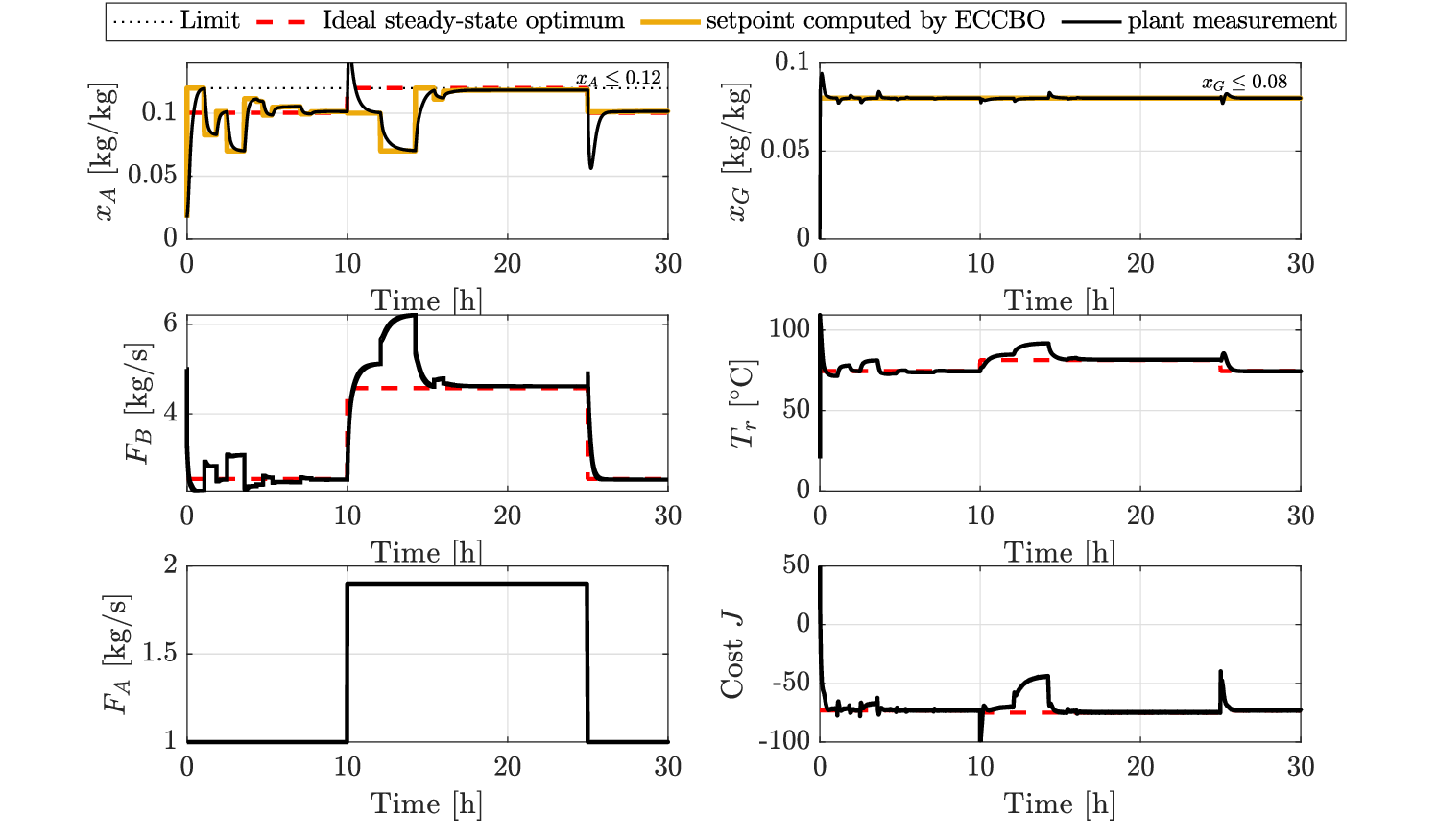}
	\caption{Simulation results shown the performance of our proposed approach (solid black), compared with the true steady-state optimum (red dashed lines). The decision variable for the Bayesian optimization, namely the setpoint for the constraint controller is shown in yellow solid lines. }\label{Fig:WOsim}
\end{figure*}
The plant measurements and the constraint controllers are sampled at a 1-second interval. For the two constraint control loops, the PI controllers are tuned using SIMC tuning rules.  The GP surrogate for the cost used by the Bayesian optimization algorithm uses the RBF kernel along with a bias kernel, and the Bayesian optimization uses the lower confidence bound  (GP-LCB) acquisition function\footnote{Traditional BO literature considers maximization problem, and hence uses the upper confidence bound (GP-UCB). The equivalent acquisition function for a minimization problem is the lower confidence bound. }. The Gaussian process is modeled using  \texttt{GPy} package \citep{gpy2014}, and the acquisition function is optimized using  \texttt{L-BFGS} algorithm from the \texttt{SciPy} library.

To initiate the Bayesian optimization routine, we employ a well-established steady-state detection algorithm, widely adopted in commercially available RTO tools. This algorithm is based on a statistical test that compares the total variance between consecutive signal points, ensuring the process operates at steady-state conditions as described by \cite{cao1995efficient,camara2016performance}.

The simulation initiates with a feed rate of $ F_{A} = 1 kg/s $, and changes to $ F_{A} = 1.9 $kg/s  at time $ t = 10$ h, and back to $ F_{A} = 1$kg/s  at time $ t = 25 $h. The set of active constraint changes, as the feed rate $ F_{A} $ varies. The true steady-state optimum for the different conditions are shown in red dashed lines in Fig.~\ref{Fig:WOsim}. Initially, when the feed rate $ F_{A} = 1  $kg/s, the constraint on $ x_{A} $ is not optimally active. Although this is unknown, the Bayesian optimization algorithm finds the optimum setpoint for $ x_{A} $ by sequentially interacting with the system. Since the search space for $ z_{A}  \in [0.07,0.12]$, we can see that the setpoints are always below 0.12, thus always ensuring constraint feasibility. The setpoint $ z_{A} $ computed by the BO algorithm is shown in yellow in the top left subplot in Fig. \ref{Fig:WOsim}.
For feedrate $ F_{A} = 1.9 $kg/s, the constraint on $x_{A}$ will be optimally active. In this case, the Bayesian optimization algorithm converges to the limiting value of $ z_{A} = 0.12 $kg/kg, without exploring any infeasible setpoints. Although at $ t=10 $h, the constraint is dynamically violated, the setpoint computed by ECCBO-RTO is feasible (i.e., no constraint violation at steady-state). Finally, when the feed rate reduces back to $ F_{A}  =1 $kg/s, the setpoint computed by the BO algorithm converges to the optimal setpoint in one step. This is because, the GP has previously seen the data for this \enquote{context}. Thanks to Gaussian processes conditioned on previously observed data, our algorithm converges to the optimum in one step.
For this process, the purity constraint is very low and from engineering insight we know that this constraint will likely be optimally active. This  can be incorporated in the GP prior, to avoid unnecessary explorations. This is reflected in the setpoint $ z_{G} $. 

In prior research, we assessed interior point-based safe Bayesian optimization and constrained Bayesian optimization on the same Williams-Otto reactor example. The findings illustrated in \cite[Fig.~5]{DK2023SafeBO:RTO} demonstrated that traditional constrained Bayesian optimization \citep{gelbert2012advanced} frequently explored infeasible setpoints, operating far beyond safe limits until the next RTO update. This limitation makes it impractical for real-time optimization in systems with safety-critical constraints. In contrast, safe Bayesian optimization, depicted in \cite[Fig.~4]{DK2023SafeBO:RTO}, avoided infeasible setpoints but required independent Gaussian process models for each constraint, which must be carefully calibrated. For the sake of brevity, these results are not reiterated in this paper, and readers are referred to \cite{DK2023SafeBO:RTO}.

\section{Conclusion}
In conclusion, this paper  introduced a novel model-free Real-Time Optimization (RTO) scheme, ECCBO (Bayesian optimization with Embedded Constraint Control). By leveraging the simplicity of constraint control, and the power of Bayesian optimization, this approach addresses a critical challenge in model-free real time optimization of process systems with unknown constraints. The main advantage of ECCBO is that it offers a practical solution that ensures feasibility and safety using well established control tools within the process control industry. In particular, we showed that using ECCBO we can get zero  cumulative constraint violation without imposing assumptions on the GP model used in the Bayesian optimization. Since ECCBO leverages the vertical decomposition with respect to the information flow in plant-wide control, it is important to note that this result is  not generalizable to  the broader constrained Bayesian optimization literature, where such lower level controllers are not implementable. 

\bibliographystyle{apalike}
\bibliography{BibRTO}             

\end{document}